\documentclass[letterpaper,10pt,conference]{ieeeconf}

\IEEEoverridecommandlockouts
\ifdefined\pdfminorversion
\fi

\usepackage{cite}
\usepackage{amsmath,amssymb,amsfonts}
\usepackage{graphicx}
\usepackage{booktabs}
\usepackage{algorithmic}
\usepackage[hidelinks]{hyperref}

\newtheorem{proposition}{Proposition}

\newcounter{dfalgorithm}
\renewcommand{\thedfalgorithm}{\arabic{dfalgorithm}}

\graphicspath{{figures/}}

\title{\LARGE \bf
RapidDOC: Riccati-Preconditioned Implicit Differentiation for
Differentiable Optimal Control under Operator Mismatch
}

\author{Xiangda Zhu, Yuankun Chen, Zifei Nie$^{*}$, and Yi Chang$^{*}$%
\thanks{Xiangda Zhu, Yuankun Chen, Zifei Nie, and Yi Chang are with the School of Artificial Intelligence, Jilin University, China.\newline
E-mails: \texttt{\{}\href{mailto:zhuxd25@mails.jlu.edu.cn}{\nolinkurl{zhuxd25}}, \href{mailto:chenyk24@mails.jlu.edu.cn}{\nolinkurl{chenyk24}}\texttt{\}@mails.jlu.edu.cn};\newline
\texttt{\{}\href{mailto:zifei_nie@jlu.edu.cn}{\nolinkurl{zifei_nie}}, \href{mailto:yichang@jlu.edu.cn}{\nolinkurl{yichang}}\texttt{\}@jlu.edu.cn}.}%
\thanks{$^{*}$Corresponding authors: Zifei Nie and Yi Chang.}}

\begin{document}

\bstctlcite{BSTcontrol}
\maketitle
\thispagestyle{empty}
\pagestyle{empty}

\begin{abstract}
Differentiable optimal control embeds constrained optimization problems in data-driven pipelines by differentiating their optimality conditions and propagating task-loss gradients to model or objective parameters.
A key observation is that forward optimization and backward sensitivity computation involve related stage-structured linear systems, making reuse of forward factorization information natural. However, direct reuse is generally invalid, because differences in linearization, barrier terms, regularization, and curvature can create a mismatch between the retained forward operator and the target operator defining the derivative.
We therefore propose RapidDOC, a factorization-reuse method that leaves
the target adjoint equation unchanged. The retained cache produces a candidate
adjoint and, when its target residual exceeds the prescribed tolerance,
serves as a fixed right preconditioner for GMRES. A cache-action error
operator characterizes directional exactness, the preconditioned
iteration, and the relation between target residual and outer-gradient
error.
An ablation on three constrained robotic OCPs shows that the proposed method achieves lower backward time than both target-system refactorization and a no-reuse iterative method, demonstrating the computational benefit of reusing the retained forward factorizations.
Across open- and closed-loop parameter-learning experiments, the
proposed method achieves learning performance comparable to the
baselines, while yielding the lowest open-loop backward time on all
three tasks and reducing total closed-loop solver-call time by
$40.5$--$83.1\%$ relative to the fastest competing method.
Finally, deploying the learned controller on a physical vehicle reduces the median figure-eight tracking RMSE from $22.75$ to $6.09$~cm, demonstrating its potential for hardware deployment.
\end{abstract}

\begin{keywords}
Differentiable optimization, factorization reuse, implicit differentiation,  mobile robotics, imitation learning.
\end{keywords}
\section{Introduction}

Robotic systems often adapt model parameters, objective weights, or
reference trajectories as tasks and operating conditions change.
Differentiable optimal control enables such adaptation by embedding a
constrained OCP in a learning pipeline
\cite{amos2017optnet,amos2018diffmpc,jin2020pdp}. Gradients propagated
through the optimized trajectory support system identification,
inverse optimal control, learning from demonstrations, and data-driven
model predictive control \cite{jin2020pdp,jin2021safepdp}. Their cost, however, includes both the nominal computation performed
by the forward solver and a backward sensitivity calculation.

The backward pass creates a specific tension. A structure-exploiting
solver has already computed expensive stage-structured Newton or KKT
factorization information, so reusing it is numerically attractive.
Yet the operator represented by that information need not be the one
that defines the desired derivative. Stale linearizations, barrier
updates, regularization, and curvature approximations can all separate
the cached forward operator from the derivative-defining target
operator. Treating the cache as an exact target factorization can then
satisfy a different adjoint equation. The issue is therefore not factor
reuse alone, but reuse without redefining the derivative.

% Unnumbered resource note in the first page's right column.
\begingroup
\renewcommand{\thefootnote}{}
\footnotetext{Code: \url{https://github.com/opt-man/RapidDOC}.\newline
Supplementary video: \url{https://youtu.be/b53gcrjdS7c}.}
\endgroup

Sparse nonlinear-programming and structure-exploiting solvers such as
IPOPT, acados, HPIPM, and FATROP accelerate nominal optimization using
sparse factorizations, condensing, or Riccati recursions
\cite{wachter2006ipopt,verschueren2022acados,frison2020hpipm,
vanroye2023fatrop,vanroye2024generalization}. Differentiable
optimization methods compute sensitivities using iterative implicit
differentiation, auxiliary optimal control systems, or specialized
sensitivity recursions
\cite{agrawal2019diffcp,jin2020pdp,jin2021safepdp,
chen2025fastdoc,chen2026panda}. Direct factorization reuse is possible when
the forward and backward systems are identical or deliberately aligned,
as in OptNet and differentiable MPC
\cite{amos2017optnet,amos2018diffmpc}. sIPOPT reuses IPOPT
factorizations for parametric sensitivity analysis
\cite{pirnay2012sipopt}, while DiffMPC shares an approximate KKT
representation across forward and backward Krylov solvers
\cite{adabag2026diffmpc}. Conversely, dNMPC constructs the prescribed
backward system separately \cite{frey2025dnmpc}. These methods either align the forward and backward operators for
direct reuse or construct the prescribed backward system independently.

We address the intermediate regime in which the retained forward
factorization remains useful despite a mismatch with the
derivative-defining operator. To address this gap, we propose \textbf{RapidDOC}, short for
\textbf{R}icc\textbf{A}ti-\textbf{P}reconditioned
\textbf{I}mplicit \textbf{D}ifferentiation for
\textbf{D}ifferentiable \textbf{O}ptimal \textbf{C}ontrol
under operator mismatch.
In the \textbf{forward pass}, the OCP solver retains its last available
generalized Riccati factorization. In the \textbf{backward pass},
\textbf{RapidDOC} uses these factors to obtain an adjoint candidate and, when
needed, to right-precondition GMRES for the prescribed target adjoint
system. The target equation remains unchanged, and its residual
determines whether the computed adjoint is accepted.

\begin{figure}[!t]
\centering
\vspace{6mm}
\includegraphics[width=0.48\textwidth]{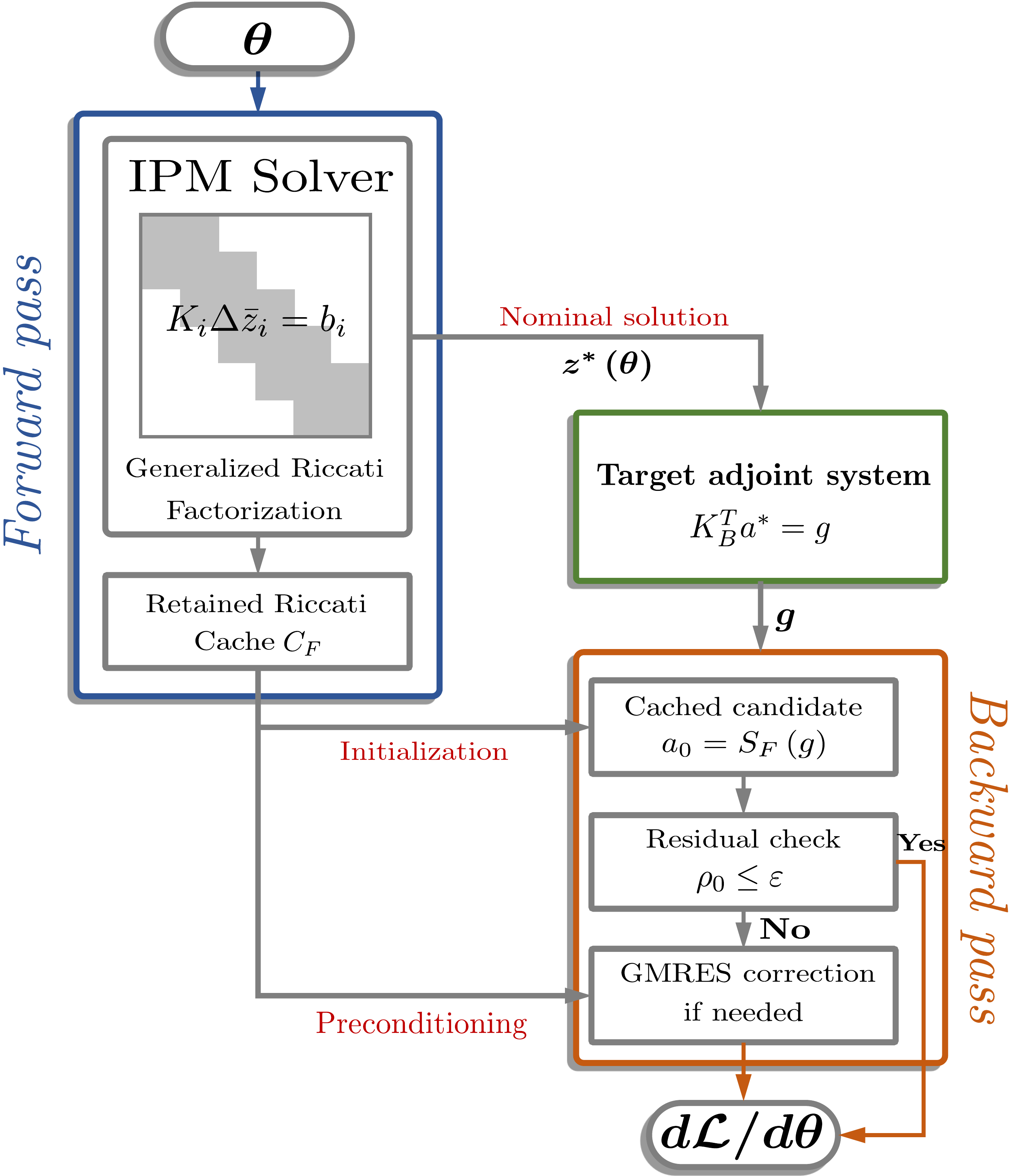}
\caption{Proposed factorization-reuse framework. The retained Riccati cache
provides an adjoint candidate and right preconditioner, while the target
residual determines acceptance.}
\label{fig:framework}
\end{figure}

Figure~\ref{fig:framework} illustrates this forward--backward
interface. The main contributions are:

\begin{itemize}

\item We develop a factorization-reuse backward scheme for
forward--backward operator mismatch. The prescribed target operator
remains the sole definition of the derivative, while the retained
generalized Riccati cache provides an adjoint candidate and a fixed
right preconditioner. This permits directional direct reuse when valid
and cache-guided correction otherwise, without requiring a fresh
target-system factorization.

\item We introduce a cache-action error operator that provides a unified
analysis of direct reuse and cache-guided correction. It characterizes
when reuse is exact for a given loss direction, governs the
right-preconditioned correction, and connects the certified target
residual to outer-gradient error. This analysis shows that reuse depends
on the mismatch relevant to the current loss direction rather than
global equality of the forward and backward operators.

\item We validate the method through a three-way factor-reuse ablation,
open- and closed-loop parameter learning on three constrained robotic
OCPs, and physical-vehicle deployment. The ablation compares reuse with
target-system refactorization and a zero-start, unpreconditioned
no-reuse baseline; the learning experiments demonstrate consistent
learning behavior with lower backward and repeated-MPC computation
time; and physical figure-eight tracking demonstrates hardware
deployment potential.

\end{itemize}
\section{Problem Formulation}
\label{sec:problem}

\subsection{Parameterized Optimal Control}

We consider a parameterized finite-horizon nonlinear OCP. At stage
$k$, let $x_k\in\mathbb{R}^{n_x}$ and
$u_k\in\mathbb{R}^{n_u}$ denote the state and control input. The
parameter vector $\theta\in\mathbb{R}^{n_\theta}$ may enter the
dynamics, initial condition, objective, or constraints. For a horizon
of $N$ stages, define
$\xi:=\operatorname{col}(x_0,u_0,\ldots,x_{N-1},u_{N-1},x_N)$.
For a given $\theta$, the OCP is, with stage-indexed constraints
imposed for $k=0,\ldots,N-1$,
\begin{equation}
\begin{aligned}
\min_{\xi}\quad
& J(\xi,\theta):=
\sum_{k=0}^{N-1}\ell_k(x_k,u_k,\theta)
+\ell_N(x_N,\theta)\\
\mathrm{s.t.}\quad
& x_0=\bar{x}(\theta),\\
& x_{k+1}=f_k(x_k,u_k,\theta),
\\
& c_k(x_k,u_k,\theta)=0,\quad
  h_k(x_k,u_k,\theta)\leq0,
\\
& c_N(x_N,\theta)=0,\quad
  h_N(x_N,\theta)\leq0.
\end{aligned}
\label{eq:ocp}
\end{equation}
Here, $\ell_k$ and $\ell_N$ are the stage and terminal costs,
$f_k$ denotes the discrete-time dynamics, and $\bar{x}(\theta)$
specifies the initial condition. The functions $c_k$ and $c_N$
represent equality constraints, whereas $h_k$ and $h_N$ represent
componentwise inequality constraints. These functions are assumed to
be twice continuously differentiable on the relevant domain.

To obtain a smooth target for differentiation, we use a fixed-barrier
reformulation of the OCP. For a barrier coefficient $\tau>0$, held
fixed during differentiation, define
\begin{equation}
\begin{aligned}
\widetilde J(\xi,\theta)
:=\;&J(\xi,\theta)
-\tau\sum_{k=0}^{N-1}
\mathbf{1}^{\top}
\log\!\bigl(-h_k(x_k,u_k,\theta)\bigr)
\\
&-\tau\mathbf{1}^{\top}
\log\!\bigl(-h_N(x_N,\theta)\bigr).
\end{aligned}
\label{eq:fixed_barrier_objective}
\end{equation}
The logarithm is applied componentwise on the strictly feasible
domain. For fixed $\tau$, let $\xi^\star(\theta)$ denote a local
solution of the resulting equality-constrained barrier problem. All
derivatives below refer to this finite-$\tau$ solution map rather than
to the active-set sensitivity of the original inequality-constrained
OCP.

\subsection{Adjoint System}

Let $q(\xi,\theta)$ collect the initial-state, dynamics, and equality
constraint residuals in \eqref{eq:ocp}, and let $\lambda$ denote their
multipliers. With $z:=\operatorname{col}(\xi,\lambda)$, the first-order
optimality residual of the fixed-barrier problem is
\begin{equation}
\mathcal F(z,\theta):=
\begin{bmatrix}
\nabla_\xi\widetilde J(\xi,\theta)
+
\left(
\dfrac{\partial q(\xi,\theta)}{\partial\xi}
\right)^\top\lambda\\[3pt]
q(\xi,\theta)
\end{bmatrix}.
\label{eq:optimality_residual}
\end{equation}

Let $\lambda^\star(\theta)$ denote the multipliers associated with
$\xi^\star(\theta)$. The selected primal--dual solution
$z^\star(\theta):=\operatorname{col}(\xi^\star(\theta),
\lambda^\star(\theta))$ satisfies
$\mathcal F(z^\star(\theta),\theta)=0$.
We assume that $\mathcal F$ is continuously differentiable near this
root and that its Jacobian with respect to $z$ is nonsingular.
Throughout, the nominal primal--dual solution is required to satisfy
these prescribed optimality conditions to the stated forward tolerance.
The mismatch studied below concerns the retained linear operator, not a
change in the differentiation point.

At the selected solution, define the prescribed backward operators
\begin{equation}
\begin{aligned}
K_B
&:=
\frac{\partial\mathcal F}{\partial z}
=
\begin{bmatrix}
H & A^\top\\
A & 0
\end{bmatrix},\\
G_B
&:=
\frac{\partial\mathcal F}{\partial\theta}
=
\begin{bmatrix}
B\\
C
\end{bmatrix}.
\end{aligned}
\label{eq:target_derivatives}
\end{equation}
Here, $H$ is the Hessian of the fixed-barrier Lagrangian,
$A=\partial q/\partial\xi$, and $B$ and $C$ contain the corresponding
derivatives with respect to $\theta$. All quantities are evaluated at
$(\xi^\star(\theta),\lambda^\star(\theta),\theta)$ with $\tau$ held
fixed. The subscript $B$ distinguishes this derivative-defining target
operator from the cached forward operator retained by the solver.

The implicit function theorem \cite{dontchev2014implicit} applied to
$\mathcal F(z^\star(\theta),\theta)=0$ gives
\begin{equation}
K_B
\frac{\partial z^\star}{\partial\theta}
=
-G_B.
\label{eq:solution_sensitivity}
\end{equation}
Let $\mathcal L(\xi,\theta)$ be a continuously differentiable outer
loss. Because $\mathcal L$ does not depend on the equality
multipliers, define the loss-induced adjoint right-hand side
\begin{equation}
g:=
\operatorname{col}
\bigl(\nabla_\xi\mathcal L,0\bigr).
\label{eq:loss_seed}
\end{equation}
The prescribed target adjoint is the solution of
\begin{equation}
K_B^\top a^\star=g.
\label{eq:adjoint_system}
\end{equation}
The resulting outer gradient is
\begin{equation}
\frac{\mathrm d\mathcal L}{\mathrm d\theta}
=
\nabla_\theta\mathcal L-G_B^\top a^\star.
\label{eq:loss_gradient}
\end{equation}
The backward task is therefore to compute the adjoint $a^\star$ defined
by \eqref{eq:adjoint_system} efficiently while preserving $K_B$ as the
derivative-defining operator.

\section{Methodology}
\label{sec:method}

The method separates the solver-owned cache from the derivative-defining
operator. The forward pass retains matrix-dependent generalized Riccati
information; the backward pass exposes it only as an approximate inverse
action and accepts an adjoint only under the prescribed target residual.

\subsection{Forward Pass}
\label{sec:forward_pass}

The forward pass computes the nominal solution by applying a
primal--dual interior-point method to the smooth fixed-barrier KKT
conditions defined in Section~\ref{sec:problem}. Under the standard
regularity conditions of interior-point methods, solutions of the
barrier subproblems approach a KKT point of the original
inequality-constrained problem as $\tau$ decreases
\cite{wachter2006ipopt}. For finite $\tau$, however, the barrier
problem defines a smooth approximation. Throughout this work, both the
nominal solution and the prescribed backward derivative refer to this
same fixed-$\tau$ solution map.

At iteration $i$, the forward solver computes the step
$\Delta\bar z_i$ from the stage-structured reduced Newton system
\begin{equation}
K_i\Delta\bar z_i=b_i,
\label{eq:forward_reduced_system}
\end{equation}
where $K_i$ is the reduced Newton operator obtained from the current
smooth KKT conditions, $\Delta\bar z_i$ contains the solver's internal
primal--dual increments, and $b_i$ is the iteration-dependent
right-hand side. The matrix-dependent part of this system is
factorized through a generalized Riccati recursion.

The generalized Riccati recursion separates matrix-dependent
eliminations from the vector recursions induced by $b_i$
\cite{vanroye2024generalization}. Its reusable state comprises
stagewise Riccati factors, equality-constraint eliminations,
coordinate transformations, and primal--dual recovery maps, rather
than a single canonical matrix factorization. Right-hand-side-dependent
vector recursions are therefore recomputed for each new right-hand side.

Let $i_F$ denote the iteration of the last successful forward
factorization. At termination, we retain its matrix-dependent cache,
\begin{equation}
\mathcal C_F:=\mathcal C_{i_F}.
\label{eq:retained_forward_cache}
\end{equation}
The converged nominal solution $z^\star$ satisfies the prescribed
fixed-barrier optimality map $\mathcal F(z^\star,\theta)=0$ to the
stated forward tolerance. Accordingly, the backward target $K_B$ is
the Jacobian of the same smooth optimality map evaluated at
$z^\star$.

Nevertheless, the retained cache $\mathcal C_F$ need not represent
$K_B$. It may originate from an earlier iterate, a different barrier
parameter, or a system containing regularization, curvature
approximations, and iterate-dependent inequality eliminations.
Therefore, the forward solution and backward derivative are aligned
through the same fixed-barrier optimality map, while the retained
linear-algebraic cache may still be mismatched with the prescribed
backward operator. The backward pass exploits this cache only as
approximate inverse information and never uses it to redefine the
derivative.

\subsection{Backward Pass}
\label{sec:backward_pass}

The backward pass must reuse the retained cache without allowing it to
define the derivative. To enforce this separation, we expose the cache
through the fixed inverse action
\begin{equation}
\mathcal S_F(v)
:=
\operatorname{CachedSolve}(\mathcal C_F,v).
\label{eq:cached_solve_action}
\end{equation}

The retained successful factorization is assumed to define a fixed
nonsingular linear action on the primal--dual space. Concretely,
\texttt{CachedSolve} maps the supplied target-coordinate right-hand
side through the cached stagewise transformations and eliminations,
performs the fixed vector recursions, and applies the cached recovery
maps to return a primal--dual vector in the ordering of $z$. No
explicit forward or target KKT matrix is formed, and all
matrix-dependent cached quantities remain fixed throughout the backward computation.
Although the canonical KKT operator in \eqref{eq:target_derivatives} is
symmetric, we retain the transpose in the adjoint equations to make their
reverse-mode origin explicit; it does not require a distinct transpose
factorization.

With this fixed action in place, a nonzero loss seed $g$ first produces
the cached adjoint candidate
\begin{equation}
a_0:=\mathcal S_F(g).
\label{eq:cached_adjoint_candidate}
\end{equation}
Rather than treating this candidate as the target adjoint, we evaluate
it using the prescribed target operator:
\begin{equation}
r_0:=g-K_B^\top a_0,
\qquad
\rho_0:=\frac{\|r_0\|_2}{\|g\|_2}.
\label{eq:initial_target_residual}
\end{equation}
It is accepted only when
$\rho_0\leq\varepsilon$, where $\varepsilon>0$ is the target-residual
tolerance.

Before constructing a correction for an rejected candidate, we first
characterize the mismatch responsible for its target residual. Define
the cache-action error operator
\begin{equation}
E_F:=I-K_B^\top\mathcal S_F.
\label{eq:normalized_mismatch_operator}
\end{equation}
This definition uses only algorithm-available actions and does not
introduce an implicit matrix representation of the cache.

\begin{proposition}[Cache-action mismatch characterization]
\label{prop:mismatch_dependent_correction}
Assume that $K_B$ is nonsingular and $\mathcal S_F$ is a fixed
nonsingular linear action. The cached candidate
satisfies
\begin{equation}
r_0=E_Fg.
\label{eq:directional_candidate_residual}
\end{equation}
Consequently,
\begin{equation}
a_0=a^\star
\quad\Longleftrightarrow\quad
E_Fg=0.
\label{eq:direction_wise_exact_reuse}
\end{equation}
Moreover, the right-preconditioned target operator has the form
\begin{equation}
K_B^\top\mathcal S_F=I-E_F.
\label{eq:preconditioned_mismatch_form}
\end{equation}
In exact arithmetic, let $r_j$ denote the target residual after $j$
iterations of unrestarted GMRES applied from a zero correction. Then
\begin{equation}
\|r_j\|_2
\leq
\|E_F^j r_0\|_2
\leq
\|E_F\|_2^{j+1}\|g\|_2.
\label{eq:mismatch_dependent_residual_bound}
\end{equation}
\end{proposition}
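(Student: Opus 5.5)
The plan is to derive every claim by direct algebra from the definitions \eqref{eq:cached_adjoint_candidate}, \eqref{eq:initial_target_residual} and \eqref{eq:normalized_mismatch_operator}, and then to use the minimal-residual property of GMRES.

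\textbf{Residual identity and exactness.} Substituting $a_0=\mathcal S_F g$ into the residual gives $r_0=g-K_B^\top\mathcal S_F g=(I-K_B^\top\mathcal S_F)g=E_Fg$, which is \eqref{eq:directional_candidate_residual}. Since $K_B$ is nonsingular, $K_B^\top$ is injective. Together with $K_B^\top a^\star=g$ this yields $r_0=K_B^\top(a^\star-a_0)$. Hence $r_0=0$ holds if and only if $a_0=a^\star$, which proves \eqref{eq:direction_wise_exact_reuse}. The identity \eqref{eq:preconditioned_mismatch_form} is simply a rearrangement of the definition of $E_F$.

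\textbf{GMRES bound.} The right-preconditioned correction solves $K_B^\top\mathcal S_F y=r_0$ with $y_0=0$ and sets $a=a_0+\mathcal S_F y$. The target residual is then $g-K_B^\top(a_0+\mathcal S_F y)=r_0-(I-E_F)y$. GMRES on the operator $M:=I-E_F$ chooses $y_j\in\mathcal K_j(M,r_0)$ to minimize $\|r_0-My\|_2$. Equivalently, $r_j=p(M)r_0$ minimizes over all polynomials $p$ of degree at most $j$ with $p(0)=1$. I will choose the admissible polynomial $p(t)=(1-t)^j$. It gives $p(M)=(I-M)^j=E_F^j$, so $\|r_j\|_2\le\|E_F^jr_0\|_2$. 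For the second inequality, substitute $r_0=E_Fg$ to get $E_F^{j+1}g$. Submultiplicativity of the induced 2-norm then gives $\|E_F^{j+1}g\|_2\le\|E_F\|_2^{j+1}\|g\|_2$.

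\textbf{Main obstacle.} The only point needing care is the bookkeeping of the right-preconditioned formulation. I must check that the GMRES iterate residual for the preconditioned system coincides with the \emph{target} residual $g-K_B^\top a_j$, which the computation above establishes. I must also check that the Krylov space lies in $\mathcal K_j(M,r_0)$ with $r_0$ as the starting vector, which holds because the correction starts from zero. Nonsingularity of $\mathcal S_F$ makes $M$ nonsingular, so GMRES is well defined and does not break down before convergence. In the exact-arithmetic setting the minimality property then holds, and no further assumptions are needed.
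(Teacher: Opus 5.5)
Your proposal is correct and takes essentially the same route as the paper. You obtain the residual identity and the exactness criterion by direct substitution and injectivity of $K_B^\top$, and the GMRES bound via the admissible polynomial $p(t)=(1-t)^j$ applied to $I-E_F$, followed by $r_0=E_Fg$ and submultiplicativity. You also check explicitly that the preconditioned GMRES residual $r_0-(I-E_F)y_j$ coincides with the target residual $g-K_B^\top a_j$, a bookkeeping step the paper leaves implicit.
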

\begin{proof}
By definition of $E_F$ and $a_0=\mathcal S_F(g)$,
\[
r_0
=
g-K_B^\top\mathcal S_F(g)
=
E_Fg.
\]
Nonsingularity of $K_B$ implies that $r_0=0$ if and only if
$a_0$ is the unique solution of $K_B^\top a=g$. Furthermore,
\[
K_B^\top\mathcal S_F
=
I-E_F.
\]
GMRES minimizes its residual over polynomials
$p\in\Pi_j$ satisfying $p(0)=1$. Selecting
$p_j(t)=(1-t)^j$ gives
$p_j(I-E_F)=E_F^j$, and hence
\[
\|r_j\|_2
\leq
\|E_F^j r_0\|_2.
\]
The second inequality follows from $r_0=E_Fg$.
\end{proof}

\begin{figure}[!t]
\vspace{2mm}
\begin{minipage}{\columnwidth}
\small
\raggedright
\refstepcounter{dfalgorithm}
\label{alg:factor_guided_backward}

\renewcommand{\algorithmicrequire}{\textbf{Input:}}
\renewcommand{\algorithmicensure}{\textbf{Output:}}

\hrule height 0.6pt
\vspace{3pt}
\noindent
\textbf{Algorithm~\thedfalgorithm}\quad Backward Pass\par
\vspace{3pt}
\hrule height 0.3pt
\vspace{2pt}

\begin{algorithmic}[1]
\setlength{\itemsep}{0.5pt}

\REQUIRE Target action $v\mapsto K_B^\top v$,
$G_B$, $g$, $\nabla_\theta\mathcal L$,
cache action $\mathcal S_F$, and tolerance $\varepsilon$
\ENSURE Gradient estimate $\widehat d_\theta$

\IF{$\|g\|_2=0$}
    \RETURN $\nabla_\theta\mathcal L$
\ENDIF

\STATE $a_0\gets\mathcal S_F(g)$
\STATE $r_0\gets g-K_B^\top a_0$,
$\rho_0\gets\|r_0\|_2/\|g\|_2$

\IF{$\rho_0\leq\varepsilon$}
    \STATE $\widehat a\gets a_0$
\ELSE
    \STATE Initialize GMRES for
    $K_B^\top\mathcal S_F(y)=r_0$ with $y_0=0$
    \STATE $j\gets0$
    \WHILE{$\rho_j>\varepsilon$}
        \STATE $j\gets j+1$ and advance GMRES to $y_j$
        \STATE $a_j\gets a_0+\mathcal S_F(y_j)$
        \STATE $r_j\gets g-K_B^\top a_j$,
        $\rho_j\gets\|r_j\|_2/\|g\|_2$
    \ENDWHILE
    \STATE $\widehat a\gets a_j$
\ENDIF

\STATE $\widehat d_\theta\gets
\nabla_\theta\mathcal L-G_B^\top\widehat a$
\RETURN $\widehat d_\theta$

\end{algorithmic}

\vspace{2pt}
\hrule height 0.6pt
\end{minipage}
\end{figure}

Proposition~\ref{prop:mismatch_dependent_correction} links the two uses
of the cache through the same error operator. First, $E_Fg$ determines
whether direct reuse is exact for the current loss direction. Second,
$E_F$ determines the form of the right-preconditioned target operator
and provides one admissible GMRES residual bound. This is a structural
characterization rather than a general GMRES convergence theorem:
$\|E_F\|_2<1$ is sufficient, but not necessary, for effective
correction. In particular, exact direct reuse is seed-dependent; a
mismatched cache can still return the exact adjoint for a specific loss
direction when $E_Fg=0$.

We now use this characterization to correct a candidate that fails the
target-residual test. When $\rho_0>\varepsilon$, write the corrected
adjoint as $a=a_0+\delta a$. Substitution into the target equation gives
\begin{equation}
K_B^\top\delta a=r_0.
\label{eq:adjoint_correction_system}
\end{equation}
We parameterize the correction using the cached inverse action,
\begin{equation}
\delta a=\mathcal S_F(y),
\label{eq:right_preconditioned_variable}
\end{equation}
which yields
\begin{equation}
K_B^\top\mathcal S_F(y)=r_0,
\qquad
a=a_0+\mathcal S_F(y).
\label{eq:right_preconditioned_correction}
\end{equation}

Because $\mathcal S_F$ is nonsingular, this change of variables is
bijective and therefore preserves the original correction equation.
Each Krylov product evaluates
$v\mapsto K_B^\top\mathcal S_F(v)$: the target supplies the matrix
action and the cache supplies fixed right preconditioning.
Although $K_B$ is symmetric in the canonical coordinates of
\eqref{eq:target_derivatives}, the right-preconditioned operator
$K_B^\top\mathcal S_F$ is generally nonsymmetric. We therefore use GMRES \cite{saad1986gmres} rather than a
symmetry-dependent Krylov method. In the
implementation, the target action is evaluated directly from packed
stage Hessian and dynamics-Jacobian blocks. Thus, ``matrix-free'' refers
to the global target KKT matrix: neither that matrix nor its
factorization is formed.

Although GMRES operates in the preconditioned variable $y$, every
candidate is accepted according to its residual under the original
target operator:
\begin{equation}
r_j:=g-K_B^\top a_j,
\qquad
\rho_j:=\frac{\|r_j\|_2}{\|g\|_2}.
\label{eq:target_adjoint_residual}
\end{equation}
The returned adjoint $\widehat a$ must satisfy
$\rho_j\leq\varepsilon$.

The certified adjoint gives the outer-gradient estimate
$\widehat d_\theta
=\nabla_\theta\mathcal L-G_B^\top\widehat a$.
To relate the residual certificate to the gradient error, define
$r:=g-K_B^\top\widehat a$ and
$d_\theta^\star
:=\nabla_\theta\mathcal L-G_B^\top a^\star$.
Since $r=K_B^\top(a^\star-\widehat a)$,
\begin{equation}
\widehat d_\theta-d_\theta^\star
=
G_B^\top K_B^{-\top}r.
\label{eq:gradient_error_identity}
\end{equation}
Consequently,
\begin{equation}
\|\widehat d_\theta-d_\theta^\star\|_2
\leq
\varepsilon
\|G_B^\top K_B^{-\top}\|_2
\|g\|_2.
\label{eq:certified_gradient_error_bound}
\end{equation}
Thus, the target residual certifies satisfaction of the prescribed
adjoint equation, while its implication for the outer gradient also
depends on the conditioning of $K_B$ and the coupling through $G_B$.

Algorithm~\ref{alg:factor_guided_backward} summarizes candidate
construction, target-residual testing, and cache-guided correction.

Candidate construction requires one cache application and one target
product. With $m$ correction iterations, the backward pass uses
$1+m$ cache applications and at most $1+2m$ target products, in
addition to the Arnoldi updates. It avoids refactorizing $K_B$, although
the benefit remains mismatch- and conditioning-dependent.

\section{Experiments}
\label{sec:experiments}

The experiments evaluate the computational benefit of factorization reuse,
its effectiveness in open- and closed-loop parameter learning, and the
transfer of the learned controller to a physical vehicle. The forward solver uses a tolerance of $10^{-8}$, and every backward
computation reaches a target relative-residual tolerance of $10^{-6}$. We report backward time for the factor-reuse
ablation and forward and backward solver times for the learning
experiments. All computational experiments run single-threaded on an
Intel Core i5-14600KF under Windows~11.

\subsection{Factorization-Reuse Ablation}
\label{sec:reuse_ablation}

This ablation evaluates whether the structured information retained by
the forward pass reduces backward computation relative to reconstructing
the target factorization or discarding the cache entirely.

\begin{figure}[!t]
	\centering
	\makebox[\columnwidth][l]{%
		\hspace*{-3mm}%
		\includegraphics[width=\columnwidth]{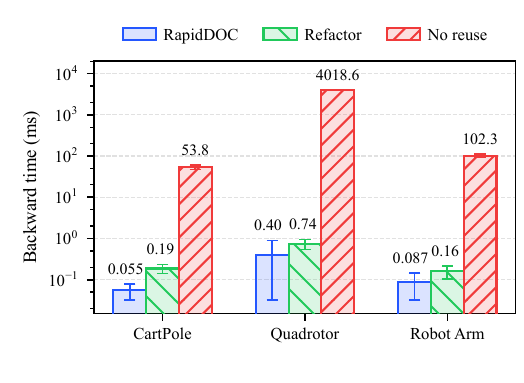}%
	}
	\caption{Backward time in the factorization-reuse ablation. Error bars show
		one standard deviation over 32 instances; the vertical axis is
		logarithmic.}
	\label{fig:reuse_ablation}
\end{figure}

We compare three backward strategies. RapidDOC uses the retained cache for
candidate construction and, when required, fixed right preconditioning.
Refactor discards the cache and constructs a new generalized Riccati
factorization for the target system. No reuse applies zero-start,
unpreconditioned GMRES to the target system. All variants use the same
forward result, target operator, adjoint right-hand side, and stopping
criterion.

We evaluate 32 instances for each of CartPole, Quadrotor, and Robot
Arm. Their horizons are 30, 50, and 35, respectively, with corresponding
target-adjoint dimensions of 278, 1424, and 358. The reported times
cover only the backward computation, including target factorization for
Refactor and cache application and correction for RapidDOC.

As shown in Fig.~\ref{fig:reuse_ablation}, RapidDOC achieves the lowest
backward time on all three tasks. Its advantage over Refactor shows
that the retained forward structure avoids reconstructing and
factorizing the target system. Its advantage over No reuse further
shows that the cache provides useful inverse information for the target
backward computation. The ablation therefore confirms the computational
value of retaining the forward factorization under a common target
operator and stopping criterion.

\begin{figure*}[!t]
\centering
\includegraphics[width=0.94\textwidth]
{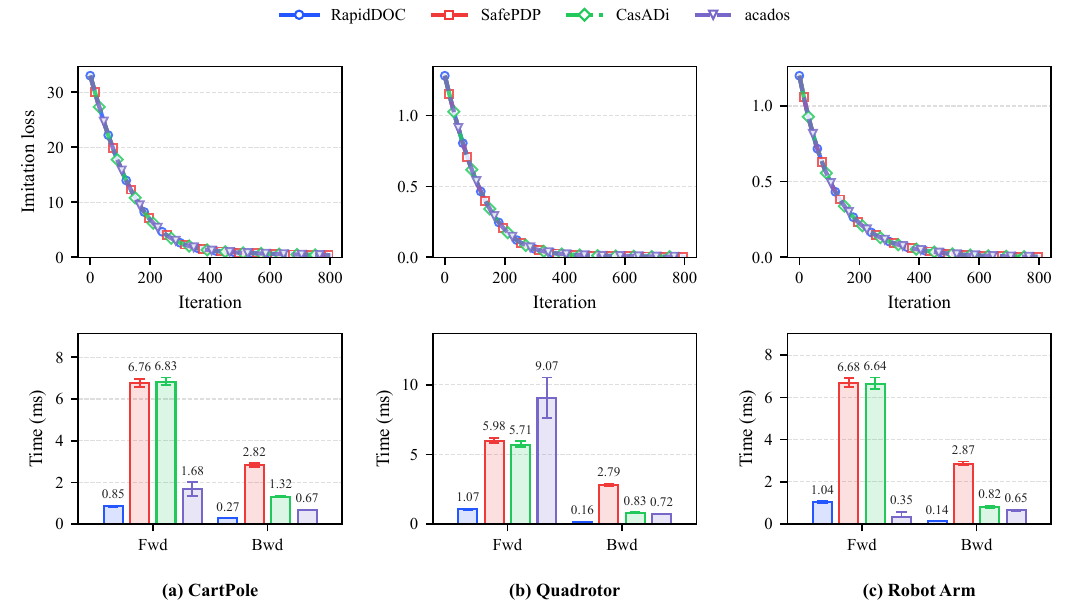}
\caption{Open-loop parameter learning. Top: trajectory-matching loss.
Bottom: forward and backward solver time; whiskers show interquartile
ranges.}
\label{fig:imitation_learning}
\end{figure*}

\begin{figure*}[!t]
\centering
\includegraphics[width=0.96\textwidth]
{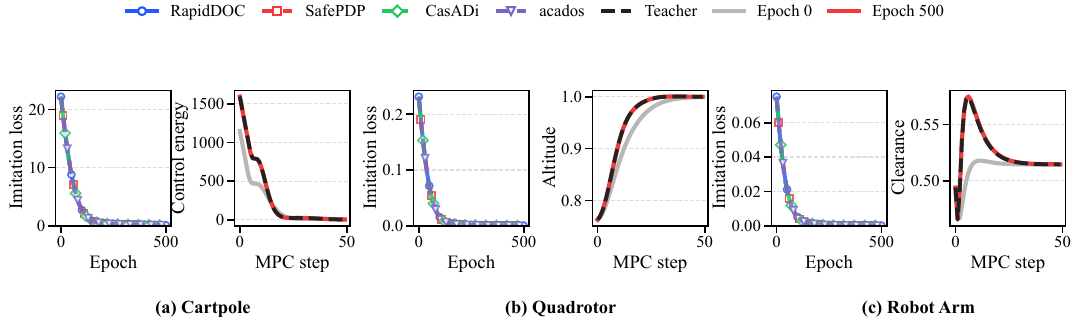}
\caption{Closed-loop parameter learning. Left: training loss. Right:
teacher and learned trajectories before and after training.}
\label{fig:closed_loop_imitation_learning}
\end{figure*}

\subsection{OCP Parameter Learning from Demonstrations}
\label{sec:imitation_learning}

We next evaluate factorization reuse in complete OCP parameter-learning
pipelines. For the $j$th initial condition $\zeta_j$, let
$\xi_j^\star(\theta):=\xi^\star(\theta;\zeta_j)$ denote the trajectory
generated by the learner and $\xi_j^{\mathrm{ref}}$ the corresponding
demonstration. The objective parameters are learned by minimizing
\begin{equation}
\mathcal L_{\mathrm{demo}}(\theta)
=
\frac{1}{2N_{\mathrm{demo}}}
\sum_{j=1}^{N_{\mathrm{demo}}}
\left\|
\xi_j^\star(\theta)-\xi_j^{\mathrm{ref}}
\right\|_W^2,
\label{eq:imitation_loss}
\end{equation}
where $\|e\|_W^2=e^\top W e$ and $W$ accounts for component scales
and horizon normalization.

Teacher and learner share the dynamics and constraints but use
different objective parameters. We consider CartPole, Quadrotor, and
Robot Arm tasks with five, six, and five log-parameterized objective
weights, respectively. All methods use the same demonstrations,
initialization, loss, and Adam updates \cite{kingma2015adam}. We compare RapidDOC with SafePDP \cite{jin2021safepdp}, a
CasADi--IPOPT reverse-mode pipeline \cite{andersson2019casadi},
and acados \cite{verschueren2022acados}.

\subsubsection{Open-Loop Learning}

The open-loop experiment uses 32 training and 16 validation
demonstrations. Each OCP has $N=40$ and a sampling interval of
$0.05$~s. All methods start from the same seed-2 initialization and
are trained for 800 iterations with a learning rate of
$2\times10^{-3}$.

Figure~\ref{fig:imitation_learning} shows closely aligned learning
trajectories across methods, indicating that they identify comparable
objectives from the demonstrations. RapidDOC also achieves the lowest
backward time on all three tasks. By reusing the structured factorizations
produced by the current forward computation, the backward pass avoids
constructing a separate factorization and reduces the work required
for correction.

The comparatively high acados time on Quadrotor is dominated by its
forward computation, consistent with the higher cost of exact-Hessian SQP on
this larger and more strongly coupled problem. Overall, the results
show that factorization reuse reduces the backward cost while preserving the
observed open-loop learning behavior.

\subsubsection{Closed-Loop Learning}

The closed-loop experiment applies the same learning procedure through
50 state-coupled MPC steps. Each OCP uses $N=40$ and a sampling interval
of $0.05$~s. All methods are trained from the same initialization for
500 epochs with a learning rate of $5\times10^{-3}$.

Figure~\ref{fig:closed_loop_imitation_learning} shows that the learned
trajectories approach their demonstrations and that the learning curves
remain closely aligned across methods. The behavior observed in the
open-loop experiment therefore persists when successive OCPs are
coupled through closed-loop state propagation.

Table~\ref{tab:closed_loop_performance} shows that RapidDOC achieves the
lowest total solver time on all three tasks. The forward factors are
updated and reused at every MPC step, so the computational benefit
accumulates over the rollout. These results show that factorization reuse
remains effective as the state, trajectory, and optimization problem
evolve during closed-loop learning.

\begin{table}[!t]
\centering
\caption{Median forward, backward, and total solver time per MPC step
during closed-loop learning (ms).}
\label{tab:closed_loop_performance}
\small
\renewcommand{\arraystretch}{1.08}
\setlength{\tabcolsep}{3pt}
\begin{tabular*}{\columnwidth}{@{\extracolsep{\fill}}llccc@{}}
\toprule
Task & Method & Fwd. & Bwd. & Total \\
\midrule
CartPole
& RapidDOC    & \textbf{0.77} & \textbf{0.13} & \textbf{0.91} \\
& SafePDP & 6.18 & 2.78 & 8.97 \\
& CasADi  & 6.06 & 2.51 & 8.59 \\
& acados  & 2.60 & 2.79 & 5.40 \\
\addlinespace
Quadrotor
& RapidDOC    & \textbf{1.07} & \textbf{0.79} & \textbf{1.86} \\
& SafePDP & 6.02 & 2.79 & 8.80 \\
& CasADi  & 5.58 & 2.09 & 7.69 \\
& acados  & 1.40 & 3.96 & 5.34 \\
\addlinespace
Robot Arm
& RapidDOC    & 0.92 & \textbf{0.63} & \textbf{1.60} \\
& SafePDP & 5.96 & 2.83 & 8.81 \\
& CasADi  & 5.62 & 2.14 & 7.76 \\
& acados  & \textbf{0.27} & 2.42 & 2.69 \\
\bottomrule
\end{tabular*}
\end{table}

\begin{figure*}[!t]
\centering
\includegraphics[width=\textwidth]
{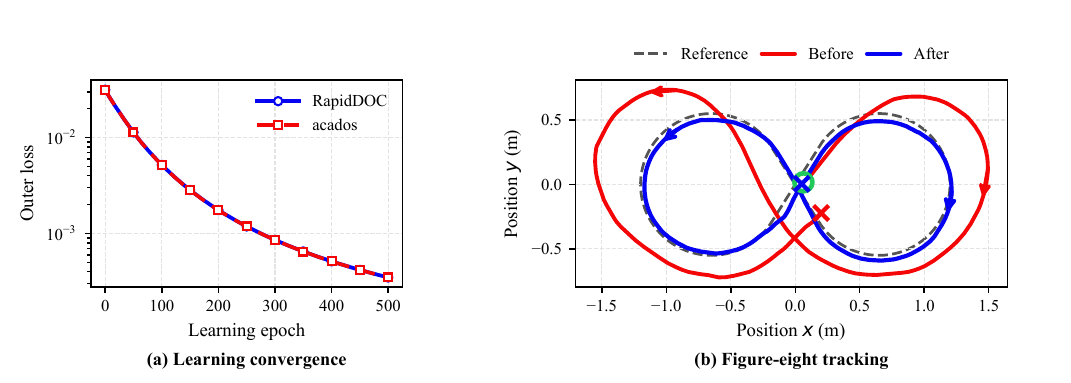}
\caption{Figure-eight imitation learning: (a) training losses of RapidDOC and acados; (b) trajectories before and
after learning.}
\label{fig:figure8_learning}
\end{figure*}

\begin{figure*}[!t]
\centering
\includegraphics[width=\textwidth]
{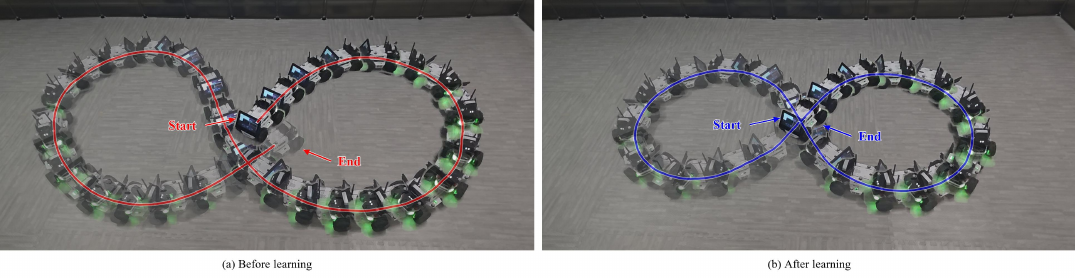}
\caption{Physical figure-eight tracking before and after learning over
the same $70.5$~s interval.}
\label{fig:physical_vehicle}
\end{figure*}

\subsection{Physical Vehicle Demonstration}
\label{sec:physical_vehicle}

To evaluate the potential of the proposed method for real-world
deployment, we consider a figure-eight imitation-learning task in which
an initially poor tracking controller is improved using a demonstrated
trajectory. The controller employs a five-state kinematic bicycle model
comprising planar position, yaw, speed, and reference phase, with
acceleration and steering as inputs. Four log-parameterized objective
weights penalizing position, heading, speed, and steering are optimized
to match the demonstrated trajectory. The MPC horizon is $N=25$, the
sampling interval is $0.1$~s, and the fixed barrier coefficient is
$\tau=10^{-3}$. Training is performed in simulation for 500 epochs using
707-step closed-loop rollouts.

Figure~\ref{fig:figure8_learning} reports the learning curves and the
tracking behavior obtained before and after training. We compare RapidDOC
with acados using the same demonstration and parameter initialization.
The two methods produce nearly identical decreases in the imitation
loss, indicating that the gradients computed by RapidDOC lead to a learning
outcome consistent with the reference solver. The trajectory comparison
further shows that the controller obtained from the initial parameters
deviates substantially from the demonstrated figure-eight path, whereas
the learned controller follows the reference much more closely. Thus,
the benefit of the computed derivatives is reflected not only in the
loss value but also in the resulting control behavior.

We next evaluate the computational cost of both methods on the onboard
computing platform. Table~\ref{tab:vehicle_learning_timing} reports the
median forward, backward, and total solver-call times per MPC step during
figure-eight learning. Backward time includes the forward recomputation
required by closed-loop reverse accumulation. RapidDOC requires $1.45$~ms
for the forward pass and $1.73$~ms for the backward pass, compared with
$2.08$~ms and $2.36$~ms for acados. Consequently, the median total
solver-call time is reduced from $4.44$ to $3.17$~ms per MPC step. This
result is consistent with the computational behavior observed in the
preceding experiments and shows that the proposed forward and backward
computations can be executed efficiently on the onboard processor.

\begin{table}[!t]
\centering
\caption{Median solver time per MPC step on the onboard platform (ms).}
\label{tab:vehicle_learning_timing}
\small
\renewcommand{\arraystretch}{1.08}
\setlength{\tabcolsep}{5pt}
\begin{tabular*}{\columnwidth}
{@{\extracolsep{\fill}}lccc@{}}
\toprule
Method & Fwd. & Bwd. & Total \\
\midrule
RapidDOC   & \textbf{1.45} & \textbf{1.73} & \textbf{3.17} \\
acados & 2.08 & 2.36 & 4.44 \\
\bottomrule
\end{tabular*}
\end{table}

Finally, the initial and learned controllers are deployed in the
onboard ROS system without further parameter updates, with the first
MPC input applied at $10$~Hz. We conduct three complete runs before
learning and two after learning. The sequences in
Fig.~\ref{fig:physical_vehicle} use the same camera viewpoint and scale,
without zooming or rescaling; the corresponding planar trajectories are
shown in Fig.~\ref{fig:figure8_learning}(b). The learned controller
follows the reference more closely, reducing the median position RMSE
from $22.75$~cm to $6.09$~cm. Given the limited number of runs, this
experiment demonstrates hardware deployment potential rather than
statistical superiority.

\section{Conclusion}

We presented \textbf{RapidDOC}, a factorization-reuse method for
differentiable OCPs under forward--backward operator mismatch.
\textbf{RapidDOC} uses retained generalized Riccati factors to initialize
and precondition the backward solve, while the prescribed target
operator remains the sole definition of the derivative.
A three-way ablation demonstrates the computational benefit of
factorization reuse. Open-loop, closed-loop, and physical-vehicle
experiments further show consistent parameter learning and the
potential for hardware deployment. This study considers a
fixed-barrier formulation and naturally occurring mismatches;
future work will address active-set changes and adaptive reuse
in embedded online differentiation.
\begingroup
\hyphenpenalty=10000
\exhyphenpenalty=10000
\emergencystretch=2em

\bibliographystyle{IEEEtran}
\bibliography{references}
\endgroup

\end{document}